\newtheorem{mylem}{Lemma}
\newtheorem{myprop}{Proposition}
\newtheorem{myrem}{Remark}
\title{Generalised Lelong-Poincaré formula in complex Bott-Chern cohomology}
\author{Xiaojun Wu}
\begin{document}
\newcommand{\phit}[1]{\varphi^{#1}_{\mathbf{t}}}
\newcommand{\opiccolo}[1]{\mathrm{o}\left(\left|#1\right|\right)}
\newcommand{\tempo}{\mathbf{t}}
\newcommand{\opiccolouno}{\mathrm{o}\left(1\right)}
\def\cI{\mathcal{I}}
\def\Z{\mathbb{Z}}
\def\Q{\mathbb{Q}}  \def\C{\mathbb{C}}
 \def\R{\mathbb{R}}
 \def\N{\mathbb{N}}
  \def\H{\mathbb{H}}
  \def\P{\mathbb{P}}
 \def\rC{\mathrm{C}}
  \def\d{\partial}
 \def\dbar{{\overline{\partial}}}
\def\dzbar{{\overline{dz}}}
\def \ddbar {\partial \overline{\partial}}
\def\cB{\mathcal{B}}
\def\cD{\mathcal{D}}  \def\cO{\mathcal{O}}
\def\cbarO{\overline{\mathcal{O}}}
\def\D{\mathcal{D}}
\def\cC{\mathcal{C}}
\def\cF{\mathcal{F}}
\def \rank{\mathrm{rank}}
\def \deg{\mathrm{deg}}
\def \tot{\mathrm{Tot}}
\def \Im{\mathrm{Im}}
\def \Re{\mathrm{Re}}
\def \id{\mathrm{id}}
\def \Exp{\mathrm{exp}_0}
\bibliographystyle{plain}
\def \End{\mathrm{End}}
\def \dim{\mathrm{dim}}
\def \div{\mathrm{div}}
\def \ker{\mathrm{Ker}}
\def \rC{\mathrm{Cone}}
\newcommand{\Ub}{\mathcal{U}}
\newcommand{\dcech}{\check{\delta}}
\newcommand{\dcechg}{\delta\!\!\!\check{\delta}}
\newcommand{\lc}{\mathcal{L}}
\newcommand{\ec}{\mathcal{E}}
\def\cB{\mathcal{B}}
\def\cbarO{\overline{\mathcal{O}}}
\def\cD{\mathcal{D}}
\def\cG{\mathcal{G}}
\def\cH{\mathcal{H}}
\def \H{\mathbb{H}}
\def\D{\mathcal{D}}
\def \rC{\mathrm{Cone}}
\def \Tors{\mathrm{Tors}}
\def \id{\mathrm{id}}
\def \ch{\mathrm{ch}}
\def \Td{\mathrm{Td}}
\def \Pic{\mathrm{Pic}}
\def \loc{\mathrm{loc}}
\def \pr{\mathrm{pr}}
\def \Cone{\mathrm{Cone}}
\def \Sh{\mathrm{Sh}}
\def \Ann{\mathrm{Ann}}
\def \Hom{\mathrm{Hom}}
\def \Mod{\mathrm{Mod}}
\def \Tot{\mathrm{Tot}}
\maketitle
\begin{abstract}
 In this note, we present a topological proof of the generalized Lelong-Poincaré formula.
 More precisely, when the zero locus of a section has a pure codimension equal to the rank of a holomorphic vector bundle, the top Chern class of the vector bundle corresponds to the cycle class of the schematic zero locus of the section in complex Bott-Chern cohomology.
\end{abstract}

In this note, we provide a topological proof of the generalised Lelong-Poincaré formula (\cite[ Theorem 1.1]{An07} or \cite[ Theorem 1.3]{CGL20}).
Both proofs require some geometric measure theory to apply a certain type of support theorem.
In contrast to the proofs in \cite[ Theorem 1.1]{An07} or \cite[ Theorem 1.3]{CGL20},
our approach is simpler in that it only relies on Demailly's support theorem for normal currents.

The key observation is that the Chern class of a torsion coherent sheaf of sufficiently low degree is trivial.
Recall the following proposition from \cite[Chapter V, (6.14)]{Kob87}:
if a coherent sheaf $\cF$ has support of codimension at least 2,
its determinant line bundle $\mathrm{det}(\cF)$ of $\cF$ is trivial, implying that  $c_1(\cF)=c_1(\mathrm{det}(\cF))=0$.
The first non-trivial Chern class of a torsion coherent sheaf appears precisely at the degree corresponding to the codimension of its support, which represents a cycle class. Using the Koszul complex, we can transfer the generalized Lelong-Poincaré formula to the non-trivial Chern class of a torsion sheaf of the lowest degree. When the schematic zero locus of a section is smooth, the proof becomes a direct application of the Riemann-Roch-Grothendieck formula and can be generalized to integral Bott-Chern cohomology (see Lemma 4).

Grivaux's construction of Chern classes for torsion sheaves in \cite{Gri} relies on the resolution of singularities. In general, it is difficult to provide an explicit formula for a general torsion sheaf, though an inductive approach based on the support of the torsion sheaf can be used. In contrast to \cite[Theorem 1.1]{An07} and \cite[Theorem 1.3]{CGL20}, the topological approach does not describe the Chern classes of a vector bundle beyond its top-degree class, which corresponds to the rank of the vector bundle. Nevertheless, it captures the fact that the top Chern class of a vector bundle is its Euler class. It is also worth noting that the original proof in \cite{An07} similarly relies on the resolution of singularities.

\subsection*{Acknowledgments}\label{subsec-ack}
I would like to acknowledge with deep sorrow the passing of my PhD advisor, Jean-Pierre Demailly whose guidance and discussions, particularly on the meaning of torsion sheaves, greatly influenced this research.
I express my gratitude to Professor Song Yang and Professor Xiangdong Yang for bringing attention to this question and contributing valuable insights. Additionally, I extend my thanks to the "Excellence Fellowships for Young Researchers" provided by Université Côte d’Azur for their support during this research endeavor. I would also like to acknowledge my postdoctoral advisor, Professor Laurent Stolovitch, for fostering a congenial and conducive work environment.
\section{Preliminaries}
\subsection{Rational/Complex Bott-Chern cohomology}
The complex Bott-Chern cohomology group is defined as the hypercohomology group
$$H^{p,q}_{BC}(X,\C)=\H^{p+q}(X, \cB^\bullet_{p,q, \C})$$
of the complex Bott-Chern complex
\begin{equation}
\cB^{\bullet}_{p,q,\C}: \C \xrightarrow{\Delta} \cO \oplus \cbarO  \to \Omega^1 \oplus \overline{\Omega^1} \to \cdots \to \Omega^{p-1} \oplus \overline{\Omega^{p-1}} \to \overline{\Omega^{p}} \to \cdots \to \overline{\Omega^{q-1}} \to 0
\end{equation}
where  $\Delta$ is multiplication by 1 for the first component and multiplication by -1 for the second component.
This definition coincides with the usual definition (i.e. $d-$closed $(p,q)-$forms modulo $i \d \dbar-$exact forms) as shown in \cite[Section 12, Chap. VI]{agbook}.

The integral Bott-Chern cohomology can be defined by replacing the locally constant sheaf $\C$ with $\Z(p)=(2 \pi \sqrt{-1})^p \Z$ (cf. \cite{Sch}).
The natural map from $\Z(p)$ to $\C$ induces a corresponding natural map between rational Bott-Chern cohomology and complex Bott-Chern cohomology.

\subsection{Grothendieck group}
To start with, we need the following lemma on Grothendieck group which is a variant of \cite[Proposition 7.8 in Arxiv version]{Gri07}.
For the convenience of the readers, we provide the detailed proof.
\begin{mylem}
  Let $X$ be
a complex compact manifold and $D$ a reduced subvariety. The
irreducible components of $D$ are denoted by $D_1, \cdots , D_N$, and $D_{ij} = D_i \cap D_j$.
Assume that $D_i$ are smooth and intersect transversally.
Consider the
canonical map
$$\oplus_i G_{D_i} (X) \to G_D (X)$$
where $G_{D_i}(X)$ is the Grothendieck group of coherent sheaves on $X$ supported in $D_i$.
Then there exists an exact sequence
$$\oplus_{ij}G_{D_{ij}}(X) \to \oplus_i G_{D_i} (X) \to G_D (X) \to 0.$$
\end{mylem}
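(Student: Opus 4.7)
The plan is to check three things: that the composition of the two displayed maps vanishes, that the right-hand map is surjective, and that the kernel of the right-hand map is contained in the image of the left-hand map. The maps themselves are natural: the right-hand map $\oplus_i G_{D_i}(X) \to G_D(X)$ is induced by the inclusion of categories $\mathrm{Coh}_{D_i}(X) \hookrightarrow \mathrm{Coh}_D(X)$, and for $i<j$ the component $G_{D_{ij}}(X) \to G_{D_i}(X) \oplus G_{D_j}(X)$ sends $[\cG]$ to $([\cG],-[\cG])$ via the natural maps from $G_{D_{ij}}(X)$ induced by the inclusions $D_{ij}\hookrightarrow D_i$ and $D_{ij}\hookrightarrow D_j$. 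The composition is then zero by construction.

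\textbf{Surjectivity.} I would argue by induction on $N$. For $N=1$ there is nothing to prove. Given $\cF$ coherent on $X$ with support in $D$, the coherence of $\cF$ together with compactness of $X$ guarantees an integer $n$ with $\cI_{D_1}^n\cF$ supported in $D_2 \cup \cdots \cup D_N$. The telescoping identity
$$[\cF] = [\cI_{D_1}^n\cF] + \sum_{k=0}^{n-1}\bigl[\cI_{D_1}^k\cF/\cI_{D_1}^{k+1}\cF\bigr]$$
holds in $G_D(X)$. Each successive quotient is annihilated by $\cI_{D_1}$ and so is a class in the image of $G_{D_1}(X)$, while the tail is handled by the inductive hypothesis applied to the union of the remaining components.

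\textbf{Exactness at the middle.} This is the delicate point, and I would again proceed by induction on $N$. In the base case $N=2$, an element of the kernel is a pair $(\cF_1,\cF_2)$ with $\cF_i$ supported on $D_i$ and $[\cF_1]+[\cF_2]=0$ in $G_D(X)$. Every relation in $G_D(X)$ is witnessed by a finite collection of short exact sequences $0\to\cA\to\cB\to\cC\to 0$ in $\mathrm{Coh}_D(X)$; applying the $\cI_{D_1}$-adic filtration of the surjectivity step to each such sequence splits it into parts supported on $D_1$ and on $D_2$ up to a correction whose support lies in $D_{12}$. Smoothness of $D_2$ together with the transversality of $D_1$ and $D_2$ ensures that $\cI_{D_1}|_{D_2}$ is the ideal sheaf of $D_{12}$ in $D_2$, so that each such correction naturally represents a class in $G_{D_{12}}(X)$. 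Assembling these corrections yields an explicit element of $G_{D_{12}}(X)$ whose image in $G_{D_1}(X) \oplus G_{D_2}(X)$ is $(\cF_1, \cF_2)$. For general $N$, I would group the components as $D_N$ against $D_1 \cup \cdots \cup D_{N-1}$, apply the $N=2$ case, and then invoke the inductive hypothesis on the smaller union to further resolve the $G_{D_1 \cup \cdots \cup D_{N-1}}(X)$ contribution.

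\textbf{Main obstacle.} The principal difficulty is precisely this middle exactness. The surjectivity is a routine ideal-filtration dévissage, but different orderings of the components in that filtration a priori produce different lifts, and the content of the middle exactness is to show that any two such lifts differ by an element in the image of $\oplus_{ij}G_{D_{ij}}(X)$. The transversality and smoothness hypotheses are used exclusively here: they ensure that the successive quotients $\cI_{D_i}^k\cF/\cI_{D_i}^{k+1}\cF$, when further restricted to another component $D_j$, produce only classes supported on the pairwise intersection $D_{ij}$ rather than on deeper strata, which is what allows the bookkeeping to close with just the pairwise-intersection terms present in the stated sequence.
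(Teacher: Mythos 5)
Your overall strategy coincides with the paper's: induct on $N$, split off one component by means of its ideal sheaf, and track correction terms supported on the pairwise intersections. The surjectivity argument via the $\cI_{D_1}$-adic filtration is fine. But the middle exactness — which you correctly identify as the delicate point — is asserted rather than proved, and this is a genuine gap. The sentence ``applying the $\cI_{D_1}$-adic filtration \dots to each such sequence splits it into parts supported on $D_1$ and on $D_2$ up to a correction whose support lies in $D_{12}$ \dots Assembling these corrections yields an explicit element of $G_{D_{12}}(X)$ whose image \dots is $(\cF_1,\cF_2)$'' is exactly the statement that requires proof: you must show that the splitting is compatible with \emph{all} relations in $G_D(X)$ simultaneously, i.e.\ that it descends to a well-defined map on the Grothendieck group modulo $G_{D_{12}}(X)$. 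The paper does this by defining $\psi(\cF)=[i_{D'}^*\cF]\oplus[\cI_{D'}\cF]$ on the free abelian group on coherent sheaves (only the \emph{first} power of the ideal is needed: after identifying $G_D(X)\simeq G(D)$ by dévissage, one has $\cI_{D'}\cdot\cI_{D_N}=0$ in $\cO_D$, so $\cI_{D'}\cF$ is already an $\cO_{D_N}$-module) and then computing, for a short exact sequence $0\to\cF\to\cG\to\cH\to 0$, that the failure of right-exactness of $i_{D'}^*$ and the failure of left-exactness of $\cF\mapsto\cI_{D'}\cF$ are measured by one and the same sheaf $\cD$ supported on $D'\cap D_N$, entering the two summands with opposite signs. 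This cancellation is the heart of the lemma and is absent from your proposal; without it there is no reason the ``corrections'' extracted from different exact sequences witnessing the same relation should assemble consistently.

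A secondary point: for general $N$ you propose to ``apply the $N=2$ case'' to $D_N$ against $D'=D_1\cup\cdots\cup D_{N-1}$, but $D'$ is not smooth, so the two-component statement must be formulated and proved in the asymmetric form (arbitrary reduced $D'$ against the smooth $D_N$) — which is what the paper does; transversality is used only to identify $\cI_{D'}\subset\cO_D$ with the ideal of the \emph{reduced} intersection $D'\cap D_N$ inside $\cO_{D_N}$. Moreover the correction produced by this step lives in $G_{D'\cap D_N}(X)$, not yet in $\oplus_{i<N}G_{D_{iN}}(X)$; one must invoke the surjectivity part of the induction hypothesis for the union $D'\cap D_N=\bigcup_{i<N}D_{iN}$ to push it there, and then run the diagram chase combining this with the inductive resolution of the $G(D')$-summand. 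Your closing sentence only mentions resolving the $G_{D_1\cup\cdots\cup D_{N-1}}(X)$ contribution, so this last bookkeeping step is also missing.
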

\begin{proof}
We proceed by induction on
the number $N$ of the irreducible components of $D$.
We recall that we have isomorphisms of Grothendieck groups $G(D_i) \simeq G_{D_i}(X)$ induced by taking direct images (cf. e.g. \cite[Lemma 1]{Wu23}).
In the following, we will identify the objects via these isomorphisms.
Let $D'$ be the reduced variety whose irreducible components are $D_1, \cdots , D_{N-1}$. We have
a complex
$$ G(D' \cap D_N ) \to G(D') \oplus G(D_N ) \xrightarrow{\pi} G(D) \to 0$$
where the first map is given by $\alpha \mapsto (\alpha, -\alpha)$. Let us verify that this complex is exact. Consider the
map
$$\psi: \Z[\mathrm{ coh}(D) ] \to  G(D') \oplus G(D_N ) / G(D' \cap D_N )$$
defined by
$$\psi(\cF) = [i^*_{D'} \cF] + [\cI_{D'}\cF]$$
where $\Z[\mathrm{ coh}(D) ]$ is the free abelian group generated by coherent sheaves on $D$, $i^*_{D'}: D' \to D$ is the natrual inclusion, and $\cI_{D'}$ is the ideal sheaf defined by $D'$.
The symbol $[\bullet]$ denotes the class to which the element enclosed within the brackets belongs.
Note that $\cI_{D'}$ is a sheaf of $\cO_{D_N}-$modules, namely the sheaf of ideals of $D' \cap D_N$ in $D_N$ extended to $D$ by zero under the transversal intersection condition.
Let us show that $\psi$ can be defined passing to Grothendieck group.

Consider an exact sequence
$$0 \to  \cF \to \cG \to  \cH \to 0$$
of coherent sheaves on $D$. Let us define
the sheaf $\cD$ by the exact sequence
$$0 \to \cD \to  i^*_{D'}\cF \to i^*_{D'}\cG \to i^*_{D'} \cH \to 0.$$
Note that $\cD$ is a sheaf of $\cO_{D'}-$modules with support in $D' \cap D_N$ , and
$$[i^*_{D'} \cG]- [i^*_{D'} \cF]- [i^*_{D'} \cH] = -[\cD ]$$
in the Grothendieck group $G(D')$. Let us consider the following exact sequence of complexes:
$$\begin{tikzcd}
0 \arrow[r] & \cI_{D'}\cF \arrow[r] \arrow[d] & \cF \arrow[r] \arrow[d] & i^*_{D'} \cF \arrow[r] \arrow[d] & 0 \\
0 \arrow[r] & \cI_{D'}\cG \arrow[r] \arrow[d] & \cG \arrow[r] \arrow[d] & i^*_{D'} \cG \arrow[r] \arrow[d] & 0 \\
0 \arrow[r] & \cI_{D'}\cH \arrow[r]           & \cH \arrow[r]           & i^*_{D'} \cH \arrow[r]           & 0
\end{tikzcd}$$
Let $\cC$ be the first column of the diagram above, which is a complex of $\cO_{D_N}-$modules.
If we denote by $H^k(\cC)$ for $0 \leq k \leq 2$, the cohomology sheaves of $\cC$, we have the long exact sequence
$$0\to H^0(\cC)\to 0\to \cD \to H^1(\cC)\to 0\to 0\to H^2(\cC) \to 0.$$
Since $H^1(\cC)$ is a sheaf of $\cO_{D_N}-$
modules,
$\cD$ is also a sheaf of $\cO_{D_N}-$modules. Therefore, $\cD$ is a sheaf of
$\cO_{D' \cap D_N}-$modules under the transversal intersection condition.
In $G(D_N )$, we have
$$[\cI_{D'} \cF] - [\cI_{D'}\cG] + [\cI_{D'}\cH ] = [H^0(\cC)] - [H^1(\cC)] + [H^2(\cC)] = -[\cD ]. $$
Thus
$$\psi(\cF) - \psi(\cG) +
\psi(\cH) = ([\cD ], -[\cD ]) = 0$$
in the quotient.

If $\cF$ belongs to $G(D)$, then $[\cF] = [i^*_{D'} \cF] + [\cI_{D'} \cF]$ in $G(D)$. This means that
$\pi \circ \psi = id$.
Now consider
$\cH$ in $G(D')$ and $\cG$ in $G(D_N )$. Then
$$
\psi(\pi(\cH, \cG)) = ([i^*_{
D'} \cH] + [i^*_{
D'} \cG]) \oplus ([\cI_{D'} \cH] + [\cI_{D'} \cG]) = ([\cH] + [i^*_{D' \cap D_N} \cG
]) \oplus [\cI_{D' \cap D_N} \cG].$$
Remark that $[\cI_{D' \cap D_N}
\cG] = [\cG] - [i^*_{D' \cap D_N} \cG
]$ in $G(D_N
)$. Thus
$$ ([\cH] + [i^*_{D' \cap D_N} \cG
]) \oplus [\cI_{D' \cap D_N} \cG]=
[\cH] \oplus [\cG]$$ modulo $G(D' \cap D_N )$, so that $\psi \circ \pi= id$. This proves the exactness.

We can now use the induction hypothesis with $D'$. We obtain the following diagram, where the columns
as well as the first line are exact:
$$\begin{tikzcd}
0                                                & 0                                                   &                &   \\
G(D' \cap D_N) \arrow[r, "r"] \arrow[u]          & G(D') \oplus G(D_N) \arrow[r, "u"] \arrow[u]        & G(D) \arrow[r] & 0 \\
\oplus_{i <N} G(D_{iN}) \arrow[r] \arrow[u, "q"] & \oplus_{i <N} G(D_{i}) \oplus G(D_N) \arrow[u, "p"] &                &   \\
                                                 & \oplus_{i <j<N} G(D_{ij}) \arrow[u, "t"]            &                &
\end{tikzcd}$$
The map $
\oplus G(D_i) \xrightarrow{\pi} G(D)$ is onto. Let $\alpha$ be an element of $
\oplus G(D_i)$ such that $\pi(\alpha) = 0$.
Then $u(p(\alpha)) = 0$, so that there exists $\beta$ such that $r(\beta) = p(\alpha)$.
There exists $\gamma$ such that $q(\gamma) = \beta$. Then
$p(\alpha - s(\gamma)) = p(\alpha) - r(q(\gamma)) = 0$. So there exists $\delta$ such that $\alpha = s(\gamma) + t(\delta)$.
It follows that $\alpha$ is in the
image of $\oplus_{i <j} G(D_{ij})$ under $s+t$.
\end{proof}
Next, we recall the definition of Chern classes of bounded complexes.
By \cite[Lemma 13.28.2]{St}, there is a canonical isomorphism between the Grothendieck group of coherent sheaves on $X$ denoted by $K_0(X)$, and the Grothendieck group of the derived category of bounded complexes of $\mathcal{O}_X-$modules
which have coherent cohomology.
More precisely, the isomorphism is induced by the map
$$\cF^\bullet \in D^b_{coh}(X) \to \sum_i (-1)^i [H^i(\cF^\bullet)] \in K_0(X).$$
Note that, by definition, $H^i(\cF^\bullet)$ is a coherent sheaf on $X$.

By the axiomatic approach of Grivaux in \cite{Gri}, the rational Bott–Chern characteristic classes extend to a group morphism between the Grothendieck group of coherent sheaves on $X$ and
$H^{(=)}_{BC}(X, \Q)^{\times}$ (with the  multiplication structure).
More precisely,
let $\cF^\bullet$ be a bounded complexes of $\mathcal{O}_X-$modules
which have coherent cohomology.
Define
$$ch(\cF^\bullet):=\sum_i (-1)^i ch(H^i(\cF^\bullet))^{}.$$
When
$$ \cF_1^\bullet \to \cF_2^\bullet \to \cF_3^\bullet \to \cF_1^\bullet[1]$$
is a distinguished triangle in the derived category of bounded complex of $\mathcal{O}_X-$modules
which have coherent cohomology,
we have that
$$ch(\cF_2^\bullet)=ch(\cF_1^\bullet)+ch(\cF_3^\bullet) $$
from the (bounded) long exact sequence
$$\cdots \to H^j( \cF_1^\bullet) \to H^j(\cF_2^\bullet) \to H^j( \cF_3^\bullet) \to H^{j+1}(\cF_1^\bullet) \to \cdots.$$
In particular, we can define the rational Bott–Chern characteristic classes of any bounded complex of $\mathcal{O}_X-$modules
which have coherent cohomology.

Let $\cF$ be a coherent sheaf over a compact complex manifold $X$.
Grivaux's approach in \cite{Gri} defines $ch(\cF)$ in rational or complex Bott-Chern cohomology with supported in the support of $\cF$.
This observation is unnecessary to prove the main result of this note.
However, we expert to prove the generalised Lelong-Poincaré formula in rational Bott-Chern cohomology combining with a better understanding of cohomology with support.
In this note, aside from this part, we will focus on the complex Bott-Chern cohomology group.

Let us give the proof of this observation.
It is enough to consider the torsion sheaves.

If the support is smooth, it is direct consequence of the Riemann-Roch-Grothendieck formula.
Denote $Z$ as the support of $\cF$.

In the case where the support is smooth and $\cF$ is general,
by devissage, we may assume that $\cF=i_* \cG$ for some coherent sheaf on $Z$.
By the Riemann-Roch-Grothendieck formula in rational or complex Bott-Chern cohomology (cf. \cite{BSW23}, \cite{Wu23}),
$$i_*(ch(\cG)Td(Z))=ch(\cF)Td(X).$$
Thus
$ch(\cF)$ is in rational or complex Bott-Chern cohomology with supported in the support of $\cF$.

If the support is an SNC divisor (more generally when irreducible components are smooth with transversal intersection), using the exact sequence
$$\oplus_{i<j} G_{Z_i \cap Z_j}(X) \to \oplus G_{Z_i}(X) \to G_{Z}(X) \to 0$$
where $Z_i$ are irreducible components of $Z$,
we can reduce to the smooth submanifold case by Lemma 1.

If the support is singular, using embeded resolution of singularity (cf. e.g. \cite[Theorem 2.0.2]{Wlo08}), the preimage of $Z$ under $\pi: \tilde{X} \to X$ of some chosen modification of $X$ (which can be obtained by a composition of blow-ups of smooth centers and is isomorphic outside the singular locus of $Z$) satisfies that the irreducible components are smooth with transversal intersection.
The natural map
$$\cF \to R \pi_* L \pi^* \cF $$
in the derived category of bounded complexes of sheaves with coherent cohomology
induces a distinguished triangle
$$\cF \to R \pi_* L \pi^* \cF  \to \cF' \to  \cF[1]$$
where $\cF'$ has support of higher codimension than $Z$.
Note that the natural map is the identity map outside the centers of the blow-up (which can be chosen to be a proper closed analtyic set of support of $\cF$) corresponding to identity in
$$Hom(\cF,R \pi_* L \pi^* \cF )=Hom ( L \pi^* \cF,  L \pi^* \cF).$$
The conclusion follows from induction on the codimension of the support and the Riemann-Roch-Grothendieck formula
$$\pi_*(ch( L \pi^* \cF ) Td(\tilde{X}))=ch(R \pi_* L \pi^* \cF )Td(X).$$

\section{Main results}
Using Grivaux's construction, we have the following result on the representatives of Chern classes using similar ideas from the previous section.
\begin{myprop}
Let $\cF$ be a coherent sheaf over a compact complex manifold $X$.
In the complex Bott-Chern cohomology,  $ch(\cF)$ is represented by normal currents with supported in the support of $\cF$.
\end{myprop}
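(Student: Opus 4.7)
The plan is to mirror the codimension-induction carried out in the discussion preceding this proposition, while tracking at every stage that the Chern character class can be represented by a current that is normal (both the current and its exterior derivative have measure coefficients) and is supported in the support of $\cF$. I would rely on three ingredients: (a) Chern--Weil theory provides smooth, hence normal, representatives for locally free sheaves; (b) for a closed embedding $i : Z \hookrightarrow X$ of a smooth compact submanifold, the pushforward $i_* \alpha$ of a smooth form $\alpha$ on $Z$ is a normal current supported in $Z$, since it has measure coefficients and $d\, i_* = i_* \, d$; and (c) for a proper holomorphic map $\pi : \tilde X \to X$ between compact complex manifolds, fiber integration sends a smooth form on $\tilde X$ to a current on $X$ whose coefficients and whose exterior differential are both $L^1$, hence to a normal current supported in the image of its support.

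First I would dévissage to reduce to the torsion case, since the locally free contribution already has smooth representatives on all of $X$. I then proceed by induction on the codimension of the support $Z$ of $\cF$. When $Z$ is smooth, I would filter $\cF$ by powers of the ideal sheaf $\cI_Z$ so that each graded piece has the form $i_* \cG$ with $i : Z \hookrightarrow X$ and $\cG$ coherent on $Z$, and apply the Bott--Chern Riemann--Roch--Grothendieck formula $i_*( ch(\cG)\, Td(Z) ) = ch(\cF)\, Td(X)$ of \cite{BSW23, Wu23}. Applying the proposition inductively on $Z$ (or directly using Chern--Weil theory on a locally free resolution of $\cG$) produces a smooth representative of $ch(\cG)\, Td(Z)$ on $Z$; its pushforward via $i$ is then a normal current supported in $Z$ by ingredient (b), and multiplication by a smooth form representing $Td(X)^{-1}$ yields the desired representative. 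When the irreducible components of $Z$ are smooth with transversal intersections, Lemma~1 reduces the problem to the smooth case on each component, with the intersection strata of strictly higher codimension falling under the inductive hypothesis.

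For arbitrary singular support $Z$, I would use an embedded resolution $\pi : \tilde X \to X$ as in the preceding discussion together with the distinguished triangle $\cF \to R\pi_* L\pi^* \cF \to \cF' \to \cF[1]$ in which $\cF'$ has support of strictly higher codimension than $Z$. A normal representative for $ch(L\pi^*\cF)\, Td(\tilde X)$ supported in $\pi^{-1}(Z)$ is provided by the SNC case just settled; pushing it forward using the Bott--Chern Riemann--Roch--Grothendieck formula $\pi_*( ch(L\pi^*\cF)\, Td(\tilde X) ) = ch(R\pi_* L\pi^*\cF)\, Td(X)$ together with ingredient (c) produces a normal representative supported in $\pi(\pi^{-1}(Z)) = Z$ for the left-hand side, while $ch(\cF')$ is handled by the inductive hypothesis on codimension. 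The step I expect to require the most care is ingredient (c): checking that the Bott--Chern Riemann--Roch identities hold at the level of currents rather than merely at the level of cohomology classes, which ultimately reduces to the standard fact that fiber integration of smooth forms under proper holomorphic maps commutes with the exterior differential, so that the normality and support properties are preserved under pushforward at each inductive step.
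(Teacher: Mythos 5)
Your proposal follows essentially the same route as the paper's proof: d\'evissage to the torsion case, induction on the dimension of the support (and of the ambient manifold), reduction to smooth or transversally intersecting components via embedded resolution and Lemma~1, the Bott--Chern Riemann--Roch--Grothendieck formula, and the fact that proper pushforward preserves normal currents. Two minor caveats: after d\'evissage the quotient $\cF/\Tors$ is only torsion-free, not locally free (harmless here, since its support is all of $X$ and every Bott--Chern class has a smooth, hence normal, representative, whereas the paper additionally invokes Rossi's theorem at this step), and the parenthetical appeal to a global locally free resolution of $\cG$ should be dropped, since such resolutions need not exist on a general compact complex manifold.
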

\begin{proof}
 We proceed by an induction on the dimension of $X$ and the dimension of the support of the coherent sheaf $\cF$.
 The induction is first on the dimension of $X$ and then on the dimension of the support of the coherent sheaf $\cF$.

 If $\cF$ is torsion-free, by \cite[Theorem 3.5]{Ros}, there exists a modification $\pi: \tilde{X} \to X$ such that $\pi^* \cF/ \Tors$ is locally free, where $\Tors$ denotes the torsion part of the corresponding coherent sheaf.
 As above, the natural map
$$\cF \to R \pi_* L \pi^* \cF $$
is generically isomorphic.
Take smooth metrics on the tangent bundle of $\tilde{X}$ and $X$ such that the Todd class can be represented by Chern-Weil forms associated with Chern connections of the corresponding metrics.
Take smooth metrics on $\pi^* \cF/ \Tors$ such that the Chern class of $\pi^* \cF/ \Tors$ can be represented by Chern-Weil forms associated with Chern connection of the corresponding metric.
The conclusion follows from induction on dimension of support and
$$\pi_*(ch( L \pi^* \cF ) Td(\tilde{X}))=ch(R \pi_* L \pi^* \cF )Td(X).$$

If $\cF$ is not a torsion sheaf,
consider the exact sequence
$$0 \to \Tors \to \cF \to \cF/ \Tors \to 0. $$
Without loss of generality and by the previous arguments, we may assume that
$\cF$ is a torsion sheaf.

Let $Z$ be the support of $\cF$.
By applying the embeded resolution of singularity, the preimage of $Z$ under $\pi: \tilde{X} \to X$ satisfies that the irreducible components are smooth with transversal intersection.
 As above, the natural map
$$\cF \to R \pi_* L \pi^* \cF $$
is generically isomorphic outside a closed analytic subset of codimension strictly greater than the codimension of $Z$.
By induction on the dimension of $Z$, the fact that the push-forward perserves normal currents, and the Riemann-Roch-Grothendieck formula,
without loss of generality, we may assume that the irreducible components of $Z$ are smooth with transversal intersection.

By devissage, we may assume that $\cF=i_* \cG$ for some coherent sheaf on a smooth submanifold $Z$.
By the Riemann-Roch-Grothendieck formula,
$$i_*(ch(\cG)Td(Z))=ch(\cF)Td(X).$$
Take smooth metrics on the tangent bundle of $Z$ and $X$ such that the Todd class can be represented by Chern-Weil forms associated with Chern connections of the corresponding metrics.
By induction of the dimension of $X$,  $ch(\cG)$ is represented by normal currents supported in the support of $\cG$.
Thus,
$ch(\cF)$ is represented by normal currents supported in the support of $\cF$.
\end{proof}

We will need the following vanishing result as a consequence of the support theorem of normal current.
\begin{mylem}
Let $\cF$ be a torsion sheaf over a connected compact complex manifold $X$.
Assume that the support of $\cF$ is of codimension at least $r+1$ in $X$.
Then for any $i \leq r$,
$$c_i(\cF)=0 \in H^{i,i}_{BC} (X, \C).$$
\end{mylem}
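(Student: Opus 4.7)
The plan is to combine Proposition 1 with Demailly's support theorem for normal currents, and then pass from the vanishing of Chern character components to the vanishing of Chern classes via Newton's identities.

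First I reduce to the Chern character. Newton's identity $p_k = c_1 p_{k-1} - c_2 p_{k-2} + \cdots + (-1)^{k-1} k\, c_k$, together with $p_k = k!\,\mathrm{ch}_k$, shows inductively that the vanishing of $\mathrm{ch}_1(\cF), \dots, \mathrm{ch}_r(\cF)$ forces $c_1(\cF) = \cdots = c_r(\cF) = 0$: from $k=1$ we get $c_1 = p_1 = 0$, then $p_2 = -2c_2$ gives $c_2 = 0$, and so on. Hence it suffices to prove $\mathrm{ch}_i(\cF) = 0$ in $H^{i,i}_{BC}(X,\C)$ for every $i \leq r$.

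By Proposition 1, each graded piece $\mathrm{ch}_i(\cF)$ is represented by a $d$-closed normal $(i,i)$-current $T_i$ whose support is contained in $Z := \mathrm{supp}(\cF)$. By hypothesis the complex codimension of $Z$ is at least $r+1$, so its real Hausdorff dimension is at most $2n-2r-2$, where $n = \dim_\C X$. The current $T_i$ has real dimension $2n-2i$, and for $i \leq r$ we have $2n-2i \geq 2n-2r > 2n-2r-2$, so the Hausdorff dimension of $\mathrm{supp}(T_i)$ is strictly less than the real dimension of $T_i$. Demailly's support theorem for normal currents then forces $T_i = 0$ as a current, and hence $\mathrm{ch}_i(\cF) = 0$ in $H^{i,i}_{BC}(X,\C)$, which by the reduction above finishes the proof.

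The main subtlety is ensuring that each graded component $\mathrm{ch}_i(\cF)$, and not only the total Chern character, admits a normal representative supported in $Z$. This is automatic: since $\partial$ and $\dbar$ act on distinct bidegrees, if a total current $T = \sum_i T_{i,i}$ is $d$-closed then $\partial T_{i,i} = 0$ and $\dbar T_{i,i} = 0$ for each $i$, so the $(i,i)$-projection of a $d$-closed normal current is itself a $d$-closed normal current whose support is contained in that of the original. Applying this projection to the total normal representative of $\mathrm{ch}(\cF)$ provided by Proposition 1 supplies the desired $T_i$, at which point the support-theorem argument above applies verbatim.
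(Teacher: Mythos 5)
Your proof is correct and follows essentially the same route as the paper: Proposition 1 supplies normal $(i,i)$-representatives supported on the support of $\cF$, and Demailly's support theorem kills them for $i \leq r$ since the support has real dimension $2n-2r-2 < 2n-2i$. The explicit passage from $\mathrm{ch}_i$ to $c_i$ via Newton's identities, and the check that the bidegree-$(i,i)$ projection of a $d$-closed normal current is again $d$-closed and normal with no larger support, are details the paper leaves implicit rather than a different argument.
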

\begin{proof}
%We proceed by induction on the codimension $r$ downstairs.
%The lemma is empty for $r$ equal to the dimension of $X$.
By Proposition 1, $c_i(\cF)$ is represented by normal currents of bidegree $(i,i)$ supported in the support of $\cF$.
These currents are trivial for $i \leq r$ by the support theorem of normal currents (cf. \cite[Chap. III, (2.11)]{agbook}).
\end{proof}
Note that the support theorem (cf. \cite[Chap. III, (2.14)]{agbook}) implies that $c_{r+1}(\cF)$ is a cycle class where $r+1$ is the codimension of the support of $\cF$ if the support is equidimensional.
\begin{mylem}
Let $\cF$ be a torsion-free coherent sheaf over a closed irreducible subvariety $Z$ in a connected compact complex manifold $X$.
Assume that $Z$ is of codimension $s$ in $X$ and $\cF$ is of rank $r$.
Let $i_Z$ be the inclusion.
Then we have
$$c_s(i_{Z*}\cF)=r\{[Z]\} \in H^{s,s}_{BC} (X, \C).$$
\end{mylem}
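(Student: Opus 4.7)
The plan is to reduce to the case of a locally free sheaf on a smooth submanifold, where the statement follows directly from the Riemann--Roch--Grothendieck formula, and then to descend the conclusion via a modification, using Lemma 3 to discard error terms of higher codimension. First I would apply embedded resolution of singularities of $Z \subset X$, yielding a modification $\pi: \tilde{X} \to X$ such that the strict transform $\tilde{Z}$ is smooth and $\pi_Z := \pi|_{\tilde{Z}}: \tilde{Z} \to Z$ is a resolution (isomorphic over the smooth locus of $Z$). Combining with Rossi's theorem \cite[Theorem 3.5]{Ros} applied on $\tilde{Z}$, and if needed a further embedded resolution of the new blow-up centres in $\tilde{X}$, I may assume that the pullback $\tilde{\cF}:=\pi_Z^*\cF/\Tors$ is locally free of rank $r$ on $\tilde{Z}$.

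In this smooth, locally free setting, Riemann--Roch--Grothendieck for the closed immersion $\tilde{i}: \tilde{Z} \hookrightarrow \tilde{X}$ reads
$$ch\bigl(\tilde{i}_* \tilde{\cF}\bigr)\, Td(\tilde{X}) \; = \; \tilde{i}_*\bigl(ch(\tilde{\cF})\, Td(\tilde{Z})\bigr),$$
whose degree-$s$ component is $r\{[\tilde{Z}]\}$, since $ch(\tilde{\cF})\, Td(\tilde{Z}) = r + (\text{degree} \geq 1)$ and $\tilde{i}_*$ shifts degree by $s$. Covariance of RRG under the proper morphism $\pi$ (cf.\ \cite{Wu23, BSW23}), together with $\pi_*\{[\tilde{Z}]\} = \{[Z]\}$ (valid because $\pi_Z$ is a modification, hence generically one-to-one of degree $1$), transports this degree-$s$ identity down to $R\pi_* \tilde{i}_* \tilde{\cF}$ on $X$. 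Since Lemma 3 forces the Chern classes of this complex to vanish in all degrees below $s$, Newton's identities then recover the claimed degree-$s$ Chern class from its degree-$s$ Chern character component.

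Finally, the adjunction map $\cF \to R(\pi_Z)_* \tilde{\cF}$ is a generic isomorphism on $Z$, so the induced distinguished triangle on $X$ has third term supported on a closed analytic subset of codimension $\geq s+1$. Applying Lemma 3 to this third term kills its $c_s$, and additivity of $ch$ along distinguished triangles yields $c_s(i_{Z*}\cF) = c_s(R\pi_* \tilde{i}_* \tilde{\cF}) = r\{[Z]\}$. The main obstacle is the geometric setup in the first paragraph, together with verifying that the comparison cone is really supported in codimension strictly greater than $s$; once these are secured, the remainder is formal manipulation of RRG, Lemma 3, and the Chern character/Chern class dictionary.
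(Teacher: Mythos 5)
Your proof is essentially the paper's own argument: both apply an embedded resolution to $Z$, compare $\cF$ with its (derived) push--pull along the resolution map $p:\tilde Z\to Z$ and use the torsion-vanishing lemma to discard the discrepancy supported in codimension $>s$, then read the result off the lowest-degree term of Riemann--Roch--Grothendieck. Your write-up is if anything slightly more explicit than the paper's (Rossi's theorem to make the pullback locally free, and the Newton-identity passage from $ch_s$ to $c_s$ --- though note that this passage literally yields $c_s=(-1)^{s-1}(s-1)!\,ch_s$ when the lower classes vanish, a normalisation point the paper's one-line "at the lowest degree" also glosses over), so there is nothing substantive to add.
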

\begin{proof}
 Apply the embeded resolution of singularity with strict transform $\tilde{Z}$ of $Z$.
 Let $p: \tilde{Z} \to Z$ be the restriction of the resolution.
 Then the natural map
 $\cF \to p_*(p^* \cF)$
 is generically isomorphic and $R^i p_*(p^* \cF)$ (for $i \geq 1$) are torsion sheaves.
 By Lemma 2,
 $$c_s(i_{Z*} p_*(p^* \cF))=c_s(i_{Z*} \cF).$$
 Apply the Riemann-Roch-Grothendieck formula to $i_Z \circ p$.
 At the lowest degree, we have
 $$c_s(i_{Z*} p_*(p^* \cF))=i_{Z*} p_*(r)$$
 which is equal to $r\{[Z]\}$.
\end{proof}
To indicate the proof of the generalised Lelong-Poincaré formula,
let us
start with a special case.
Assume that the schematic zero locus $Z$ of some section of a vector bundle $E$ (of rank $r$) is smooth with inclusion $i: Z \to X$.
By the Riemann-Roch-Grothendieck formula in rational or complex Bott-Chern cohomology (cf. \cite{BSW23}, \cite{Wu23}),
$$i_*(Td(Z))=ch(i_* \cO_{Z})Td(X)$$
whose lowest degree gives
$$i_*(1)=c_r(\cO_X/\cI_{Z}).$$
However, $i_*(1)$ is the cycle class  in rational or complex Bott-Chern cohomology supported on $Z$.
By the Koszul resolution of $\cO_X/\cI_{Z}$ under the assumption that $Z$ is smooth, we have
$$c_r(E)=c_r(\cO_X/\cI_{Z})$$
which gives the generalised Lelong-Poincaré formula combining with the previous equality.

The proof of the general case is as follows.
\begin{myprop}
Let $E$ be a holomorphic vector bundle of rank $r$ over a connected compact complex manifold $X$ of dimension $n$ and $s : X \to E$ be a section such that $Z:=s^{-1}(0)$ (with the reduced complex space structure) is of pure
complex codimension $ r$.
Consider the analytic current $[s^{-1}(0)]$ as the sum of the irreducible components of $s^{-1}(0)$,
each multiplied with its Samuel multiplicity (cf. \cite{Bar75}).
Then we have the equality in the complex Bott-Chern cohomology
$$c_r(E)=\{[s^{-1}(0)]\}.$$
\end{myprop}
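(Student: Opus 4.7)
The plan is to combine the Koszul resolution of the schematic zero locus with a dévissage by associated primes and then invoke Lemma 2 and Lemma 3. Let $\cI_s \subset \cO_X$ be the ideal sheaf generated by the components of $s$ and set $\cF := \cO_X/\cI_s$; its set-theoretic support is $Z = s^{-1}(0)$, of pure codimension $r$.

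Since the codimension of $Z$ equals the rank $r$ of $E$, the section $s$ is a regular section, so the Koszul complex
\begin{equation*}
0 \to \Lambda^r E^* \to \cdots \to E^* \to \cO_X \to \cF \to 0
\end{equation*}
is a locally free resolution of $\cF$. In $K_0(X)$ this yields $[\cF] = \sum_{i=0}^{r}(-1)^{i}[\Lambda^i E^*]$, and hence $ch(\cF) = \prod_{j=1}^{r}(1 - e^{-\alpha_j})$ at the level of the formal Chern roots $\alpha_j$ of $E$. The lowest nontrivial component of this product lives in $H^{r,r}_{BC}(X,\C)$ and is exactly $c_r(E)$.

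Next, I would apply the standard filtration by associated primes to the torsion sheaf $\cF$: there exists a chain $0 = \cF_0 \subset \cF_1 \subset \cdots \subset \cF_N = \cF$ whose successive quotients are cyclic of the form $\cO_X/\mathfrak{p}_j$ for primes $\mathfrak{p}_j$. The minimal primes appearing are precisely those defining the irreducible components $Z_1, \ldots, Z_N$ of $Z$, and the number of indices $j$ for which $\mathfrak{p}_j$ cuts out $Z_k$ equals the length of the Artinian local ring $\cF_{\eta_k}$ at the generic point $\eta_k$ of $Z_k$. Regularity of $s$ identifies this length with the Samuel multiplicity $m_k$ of $Z_k$ in $s^{-1}(0)$, and the remaining quotients are supported in codimension at least $r + 1$.

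Applying $ch$ to this filtration, Lemma 2 forces every quotient with support of codimension $\geq r+1$ to vanish in $H^{r,r}_{BC}(X,\C)$, whereas Lemma 3 gives that each quotient $\cO_X/\mathfrak{p}_k = (i_{Z_k})_*\cO_{Z_k}$ contributes $\{[Z_k]\}$ in degree $r$. Summing, the degree-$r$ part of $ch(\cF)$ equals $\sum_k m_k\{[Z_k]\} = \{[s^{-1}(0)]\}$; comparing with the Koszul computation gives $c_r(E) = \{[s^{-1}(0)]\}$. The main obstacle is the dévissage/multiplicity step: one must identify the number of times each minimal prime occurs in the filtration with the Samuel multiplicity from the statement, which hinges on the fact that the regularity of $s$ makes $(\cO_X/\cI_s)_{\eta_k}$ an Artinian local ring whose length is precisely the Samuel multiplicity of $Z_k$.
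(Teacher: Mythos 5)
Your strategy is essentially the paper's: use the Koszul complex to identify the degree-$r$ part of $ch(\cO_X/\cI_s)$ with $c_r(E)$, devissage the torsion sheaf $\cO_X/\cI_s$ into pieces carried by the individual components $Z_k$ plus pieces in codimension $\geq r+1$, kill the latter with Lemma 2, evaluate the former with Lemma 3, and identify the generic length with the Samuel multiplicity (correct: a parameter ideal in the Cohen--Macaulay local ring $\cO_{X,\eta_k}$ has multiplicity equal to its colength, which is exactly how the paper reads off $m_i$ as the generic rank over $\cO_{Z_i}$). Your first step is in fact slightly stronger than what the paper uses: you claim genuine regularity of $s$, which does follow from the pure-codimension hypothesis since $\cO_{X,x}$ is Cohen--Macaulay (a point worth making explicit), whereas the paper only invokes exactness of the Koszul complex outside a proper analytic subset of $Z$ and disposes of the discrepancy with Lemma 2.

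The one step that does not hold as stated is the devissage. A filtration of $\cO_X/\cI_s$ whose graded pieces are literally $\cO_X/\mathfrak{p}_j$ does not exist in general: globally the pieces are only pushforwards $(i_{Z_k})_*\cG_k$ of rank-one torsion-free (or lower-dimensional) coherent sheaves on the integral subvarieties, not structure sheaves --- already $i_*L$ for a line bundle $L$ of nonzero degree on a smooth divisor is a counterexample --- and even the existence of that weaker filtration is not a standard quotable fact in the analytic category. The paper circumvents this by using the explicit, globally defined $\cI_{Z_i}$-adic filtration, whose graded pieces $\cI_{Z_i}^j/(\cI_{s^{-1}(0)}\cI_{Z_i}^j+\cI_{Z_i}^{j+1})$ are manifestly coherent $\cO_{Z_i}$-modules; Lemma 3 is stated for arbitrary torsion-free sheaves of any rank on an irreducible subvariety precisely so that it applies to such pieces, each contributing $\mathrm{rank}\cdot\{[Z_i]\}$, with the ranks summing to the generic length $m_i$. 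Your argument is therefore correct in outline, but the devissage step must be replaced by (or justified along the lines of) this concrete filtration rather than an appeal to associated primes.
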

\begin{proof}
Consider the canonical
Koszul complex $K(s) = (\wedge^\bullet E^* )$ of the coherent sheaf $\cO_X /\cI_{s^{-1}(0)}$.
Here $\cI_{s^{-1}(0)}$ is the ideal sheaf defined by the schematic zero locus of $s$.
Note that the quotient sheaf $\cO_X /\cI_{s^{-1}(0)}$ may have nilpotent elements.
In general, the Koszul complex is not exact at every point.
However,  the Koszul complex is exact at the point where the (not necessarily reduced) complex space defined by $\cI_{s^{-1}(0)}$ is Cohen-Macaulay.
By \cite[Theorem 11.12, Page 75]{GPR94},
it is exact outside a proper closed analytic subset of $Z$.
By Lemma 2, we have that
$$c_r(E)=c_r(\cO_X /\cI_{s^{-1}(0)}).$$
Let $Z_i$ be the irreducible components of $Z$ with multiplicity $m_i$.
Recall that $m_i$ is the rank of $\cO_X /\cI_{s^{-1}(0)}$ (over $\cO_{Z_i}$) at a general point of $Z_i$
such that it is a smooth point of $Z_i$ disjoint with any other irreducible components and $\cO_X /\cI_{s^{-1}(0)}$ is locally free near this point.
They are the coefficients of $Z_i$ under the natural map from the Douady space of $\cO_X /\cI_{s^{-1}(0)}$ to the Barlet space.
(For more information on the natural map from the Douady space to the Barlet space, we refer to \cite[Chaptre V]{Bar75}.)

By Lemma 2, we have that
$$c_r(\cO_X /\cI_{s^{-1}(0)})= \sum_{i,j \geq 0} c_r(\cI_{Z_i}^j /(\cI_{s^{-1}(0)}\cI_{Z_i}^j+\cI_{Z_i}^{j+1})).$$
Note that the sum is finite by the assumption that $X$ is compact.
On the other hand, $\cI_{Z_i}^j /(\cI_{s^{-1}(0)}\cI_{Z_i}^j+\cI_{Z_i}^{j+1})$ can be viewed as the direct image of some coherent sheaf on $Z_i$.
In other words, they are $\cO_{Z_i}-$coherent sheaves.
By Lemma 2 and 3,
we have
$$c_r(\cO_X /\cI_{s^{-1}(0)})= \sum_{i,j \geq 0} \mathrm{rank}(\cI_{Z_i}^j /(\cI_{s^{-1}(0)}\cI_{Z_i}^j+\cI_{Z_i}^{j+1}))\{[Z_i]\}.$$
However for fixed $i$,
$$\sum_{j \geq 0} \mathrm{rank}(\cI_{Z_i}^j /(\cI_{s^{-1}(0)}\cI_{Z_i}^j+\cI_{Z_i}^{j+1}))$$
is the rank of $\cO_X /\cI_{s^{-1}(0)}$ as a $\cO_X/\cI_{Z_i}-$module at a general point of $Z_i$,
which is equal to $m_i$.
\end{proof}

The generalised Lelong-Poincaré formula holds for integral Bott-Chern cohomology if the schematic zero locus is smooth.
The following is a variant of \cite[Lemma 3.2]{Ful}.
\begin{mylem}
Let $E$ be a holomorphic vector bundle of rank $r$ over a compact complex manifold $X$.
Assume that there exists a non trivial $s \in
H^0(X, E)$ such that the (schematic) zero locus $Z(s)$ is smooth.
Then $c_r(E)$ is the cycle class associated to the zero locus in the integral Bott-Chern cohomology.
\end{mylem}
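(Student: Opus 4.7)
The plan is to reproduce the short computation sketched just before Proposition 2 with integral coefficients, replacing the appeal to the rational Riemann-Roch-Grothendieck formula by its lowest-degree consequence, which involves no Todd denominators and therefore lifts to integral Bott-Chern cohomology.

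Since $Z := s^{-1}(0)$ is smooth of pure codimension $r$, the section $s$ is locally regular and the Koszul complex
\[
0 \to \wedge^r E^* \to \wedge^{r-1} E^* \to \cdots \to \wedge^1 E^* \to \cO_X \to i_*\cO_Z \to 0
\]
is a finite locally free resolution. A splitting-principle calculation yields $\sum_{i=0}^r(-1)^i \ch(\wedge^i E^*) = \prod_j (1 - e^{-x_j})$, whose degree-$r$ component is $\prod_j x_j = c_r(E)$; hence $\ch_r(i_*\cO_Z) = c_r(E)$ in $H^{r,r}_{BC}(X, \Z(r))$. Separately, in the rational identity $\ch(i_*\cO_Z)\,\Td(X) = i_*(\Td(Z))$ both Todd classes equal $1$ in degree $0$ and contribute only in strictly positive degree, while $\ch_k(i_*\cO_Z) = 0$ for $k < r$ by the support-codimension constraint. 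Extracting the degree-$r$ part therefore yields the clean identity $\ch_r(i_*\cO_Z) = i_*(1) = \{[Z]\}$, which involves only the Gysin pushforward of the fundamental class of $Z$ along the smooth closed immersion $i$, with no denominators anywhere. Comparing the two computations gives the desired equality $c_r(E) = \{[Z]\}$.

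The main technical point to verify is that this reduced Riemann-Roch identity is genuinely available integrally, i.e.\ that the Gysin pushforward $i_*\colon H^{0,0}_{BC}(Z, \Z) \to H^{r,r}_{BC}(X, \Z(r))$ is well defined and intertwines the two computations of $\ch_r(i_*\cO_Z)$ at the cochain level. This should follow from Schweitzer's construction of integral Bott-Chern cohomology combined with the explicit Koszul model for $i_*\cO_Z$, together with the Thom isomorphism for the normal bundle $N_{Z/X} = E|_Z$, which is integrally valid because $Z$ is smooth.
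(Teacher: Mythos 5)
Your proposal has a genuine gap at exactly the point you flag as "the main technical point to verify", and that point is not a technicality --- it is the whole content of the lemma. The identity $\ch(i_*\cO_Z)\,\Td(X)=i_*(\Td(Z))$ is only established (in \cite{BSW23}, \cite{Wu23}) in rational or complex Bott--Chern cohomology, and extracting its degree-$r$ component gives $\ch_r(i_*\cO_Z)=i_*(1)$ only as an identity in $H^{r,r}_{BC}(X,\Q)$ or $H^{r,r}_{BC}(X,\C)$. The fact that the resulting formula "has no denominators" does not lift it to $H^{r,r}_{BC}(X,\Z(r))$: the comparison map $H^{r,r}_{BC}(X,\Z(r))\to H^{r,r}_{BC}(X,\C)$ is in general not injective for $r\geq 1$ (torsion and other classes die), so a rational equality between two integral classes does not imply the integral equality. (The paper itself only ever uses injectivity of this map in bidegree $(0,0)$, where it does hold, to normalize $p_*(c_1(\cO(1))^{r-1})=1$.) To carry out your plan one would need an integral Riemann--Roch without denominators for the closed immersion $i$ in Bott--Chern cohomology --- essentially a Bott--Chern analogue of Jouanolou's theorem --- and the appeal to "Schweitzer's construction plus the Thom isomorphism" is a placeholder for that unproven result, not a proof of it. A secondary issue of the same nature: each individual class $\ch_r(\wedge^iE^*)$ has denominators, so even the first half of your computation, $\sum_i(-1)^i\ch_r(\wedge^iE^*)=c_r(E)$, is a priori a rational identity; only the alternating sum is integral, and identifying it integrally with $\ch_r(i_*\cO_Z)$ again presupposes an integral theory of localized Chern characters.

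The paper's proof avoids Riemann--Roch entirely. It first settles the line bundle case by citing \cite[Lemma 6.47]{Wu20}, then runs Fulton's splitting construction $f:Y\to X$: the pulled-back section $f^*s$ produces a nested chain of smooth divisors $D_r\supset D_{r-1}\supset\cdots\supset D_1=f^{-1}(Z(s))$ cut out by sections of the successive line bundle quotients $L_i$, the Whitney formula gives $c_r(f^*E)=\prod c_1(L_i)$, and repeated application of the line bundle case yields $c_r(f^*E)=\{[f^{-1}(Z(s))]\}$; finally the projection formula together with $p_*(c_1(\cO(1))^{r-1})=1$ in $H^{0,0}_{BC}(X,\Z)$ descends the identity to $X$. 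If you want to salvage your approach, you would have to prove the integral Grothendieck--Riemann--Roch without denominators for $i$ in Bott--Chern cohomology first, which is a harder statement than the lemma itself.
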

\begin{proof}
The line bundle case is a consequence of \cite[Lemma 6.47]{Wu20}.

Consider the general case.
Let $f: Y \to X$ be the splitting construction given on \cite[Page 52]{Ful} such that we have a filtration
$$E_0=0 \subset E_{r-1} \subset \cdots \subset f^* E=E_r$$
with $E_i/E_{i-1}=L_i$ line bundle quotients for any $i \geq 1$.

Let $p: \P(E) \to X$ be the projection.
 By the projection formula, we have in the integral Bott-Chern cohomology,
$$c_r(E) p_*(c_1(\cO(1))^{r-1})=p_*(p^* (c_r(E)) c_1(\cO(1))^{r-1}).$$
However, the natural map
$$H^{0,0}_{BC}(X, \Z) =\Z \to H^{0,0}_{BC}(X, \C)=\C$$
in injective.
Thus the fact that $p_*(c_1(\cO(1))^{r-1})=1 \in H^{0,0}_{BC}(X, \C)$ (by integration of the Chern curvature of any smooth metric on $\cO(1)$)
impiles that
$p_*(c_1(\cO(1))^{r-1})=1 \in H^{0,0}_{BC}(X, \Z)$.
In other words,
$$c_r(E) =p_*(p^* (c_r(E)) c_1(\cO(1))^{r-1}).$$
%Notice that when taking the product of hypercohomology groups, the product is supported in $p^{-1}({Z(s)})$ if one of the product terms is supported in $p^{-1}({Z(s)})$.
If $p^* (c_r(E))$
%can be defined as the image of some element under the natural map from the cohomology with support in
is the cycle class corresponding to
$p^{-1}({Z(s)})$, $c_r(E)$
%can be defined as the image of some element under the natural map from the cohomology with support in
is the cycle class corresponding to
${Z(s)}$.
The reason is as follows.
By assumption and the projecton formula,
$$p_*(p^* (c_r(E)) c_1(\cO(1))^{r-1})=p_* i_{p^{-1}({Z(s)})*}( c_1(i_{p^{-1}({Z(s)})}^* \cO(1))^{r-1})$$
where $i_{p^{-1}({Z(s)})}$ is the inclusion.
On the other hand, $i_{p^{-1}({Z(s)})}^* \cO(1)=\cO_{\P(E|_{Z(s)})}(1)$
from which we have
$$c_r(E)=i_{Z(s)*}p|_{p^{-1}(Z(s))*} ( c_1( \cO_{\P(E|_{Z(s)})}(1))^{r-1})$$
$$=i_{Z(s)*}(1)=\{Z(s)\}.$$

Since the splitting construction is given by successive projectivization of some holomorphic vector bundle,
by induction,
it is enough to show that $f^* (c_r(E))$
%can be defined as the image of some element under the natural map from the cohomology with support in
is the cycle class corresponding to
$f^{-1}({Z(s)})$.

The below proof follows the construction of \cite[Lemma 3.2]{Ful}.
Note that $f^* s \in H^0(Y, f^*E)$, which defines a section $s_r$ in $H^0(Y, L_r)$.
By the assumption that $Z(s)$ is smooth,
local calculation shows that
the divisor $D_r$ defined by $s_r$ is smooth.
Let $i_{D_r}$ be the inclusion map.
Then $s$ also induces a global section in $i_{D_r}^* (E_{r-1})$ whose (schematic) zero locus is isomorphic to $f^{-1}({Z(s)})$.
This global section defines a section  $s_{r-1}$ in $H^0(D_r, L_{r-1}|_{D_r})$.
Let $D_{r-1}$ be the corresponding divisor in $D_r$, which is smooth.
We can continue this procedure to define smooth cycles $D_i(1 \leq i \leq r)$ of codimension $r-i+1$ with corresponding sections $s_{i}(1 \leq i \leq r)$ (defined on $D_{i+1}$).
By construction,
the zero locus of $s_1$
is isomorphic to $f^{-1}({Z(s)})$ (i.e. $D_1=f^{-1}({Z(s)})$).

By the Whitney property, we have that
$$c_r(f^* E)= \prod_{i=1}^r c_1(L_i).$$
Thus in $H^{r,r}_{BC}(Y, \Z)$,
using the line bundle case,
$$c_r(f^* E)= \prod_{i=1}^{r-1} c_1(L_i) \cdot \{[D_r]\}=i_{D_r*}(\prod_{i=1}^{r-1} c_1(i_{D_r}^* L_i)).$$
By induction, we have that
$$c_r(f^* E)=i_{D_r*} \circ \cdots \circ i_{D_1*} (1)=\{[f^{-1}({Z(s)})]\}.$$
%Since the cycle class is supported in $D_r$, the right handed side term defines an element in $\H^{2r}_{Z(s)}(X, \cB_{r,r,\Z}) $.
This finishes the proof of the lemma.
\end{proof}

\begin{myrem}
Using the natural map from the complex Bott-Chern cohomology to complex Deligne cohomology, the generalised Lelong-Poincaré formula holds in complex Deligne cohomology.
However, since complex Deligne cohomology can be represented by hypercocycles instead of smooth forms of non-pure type (i.e. not bidegree $(r,r)$ for some $r$), and there exists no support theorem for currents of non-pure type, our proof cannot apply directly to show the formula in complex Deligne cohomology.

Our proof cannot apply directly to show the formula in integral or rational Bott-Chern cohomology for the same difficulty.
\end{myrem}

\begin{myrem}
The generalised Lelong-Poincaré formula easily implies the following corollary of Demailly on Monge-Amp\`ere operator (cf. \cite[Theorem (4.5), Chap. III]{agbook}).

Let $D_1, \cdots , D_q$ be effective divisors on a compact complex manifold $X$.
Let $\cO_X(D_i)$ be the corresponding line bundles.
The currents
$[D_1] \wedge \cdots \wedge [D_q]$ are well defined as a closed positive current on $X$ as soon as
$$
\mathrm{codim}_{\C}
(D_{j_1} \cap \cdots \cap D_{j_m} ) \geq m
$$
for all choices of indices $j_1 < \cdots < j_m$ in $\{1, \cdots , q\}$.
As a corollary, the class $c_1(\cO_X(D_1)) \cdots c_1(\cO_X(D_q))$ contains a positive $(q,q)-$current as a representative under this assumption.

Define $E= \oplus_{i=1}^q \cO_X(D_i)$.
The corollary of Demailly follows from
$$c_q(E)=c_1(\cO_X(D_1)) \cdots c_1(\cO_X(D_q))$$
by applying the generalised Lelong-Poincaré formula.
\end{myrem}
\begin{myrem}
The codimension assumption is essential for the generalized Lelong-Poincaré formula, as demonstrated in Remark 2.
Take $X$ to be the blow-up of $\P^2$ with exceptional divisor $D$.
Consider the vector bundle $E=\cO_X(D) \oplus \cO_X(D)$.
Since
$$c_2(E)=c_1(\cO_X(D))^2=-1,$$
the generalized Lelong-Poincaré formula does not hold in this case.
\end{myrem}

\end{document}